\newtheorem{theorem}{Theorem}[section]
\newtheorem{definition}[theorem]{Definition}
\newtheorem{lemma}[theorem]{Lemma}
\newtheorem{corollary}[theorem]{Corollary}
\newtheorem{proposition}[theorem]{Proposition}
\newtheorem{acknowledgment}{Acknowledgment}
\numberwithin{equation}{section}
\newcommand{\C}{{\mathbb C}}
\newcommand{\D}{{\mathbb D}}
\newcommand{\N}{{\mathbb N}}
\newcommand{\cM}{{\mathcal M}}
\newcommand{\cD}{{\mathcal D}}
\newcommand{\R}{{\mathbb R}}
\newcommand{\cH}{{\mathcal H}}
\newcommand{\cF}{{\mathcal F}}
\newcommand{\la}{\langle}
\newcommand{\ra}{\rangle}
\newcommand{\lam}{\lambda}
\begin{document}

\title{
An unbounded approach to the theory of kernel functions
}
\author{
{\sc Michio SETO}\\
[1ex]
{\small National Defense Academy,  
Yokosuka 239-8686, Japan} \\
{\small 
{\it E-mail address}: {\tt mseto@nda.ac.jp}}
}
\date{}

\maketitle
\begin{abstract}
In this paper, we give a new approach to the theory of kernel functions. 
Our method is based on the structure of Fock spaces. 
As its applications, various examples of strictly positive kernel functions are given. Moreover,  
we give a new proof of the universal approximation theorem for the Gaussian kernel. 
\end{abstract}

\begin{center}
2020 Mathematical Subject Classification: Primary 46E22; Secondary 30C40\\
keywords: reproducing kernel Hilbert space, strictly positive kernel, Gaussian kernel
\end{center}

\section{Introduction}

Let $X$ be a set, and
let $k$ be a complex-valued function on $X\times X$.  
Then, $k$ is called a kernel function if $k$ is self-adjoint, that is, $k(y,x)=\overline{k(x,y)}$, and  
\[
\sum_{i,j=1}^nc_i\overline{c_j}k(x_i,x_j)\geq 0
\]
for any $n\geq 1$, any $x_1,\ldots, x_n\in X$ and any $(c_1,\ldots, c_n)\in \C^n$. 
In this paper, kernel functions will be called kernels for short. 
Moreover, a kernel $k$ is said to be strictly positive if 
\[
\sum_{i,j=1}^nc_i\overline{c_j}k(x_i,x_j)>0
\]
for any $n\geq 1$, any $n$ distinct points $x_1,\ldots, x_n\in X$ and any $(c_1,\ldots, c_n)\in \C^n\setminus \{\mathbf{0}\}$. 
Strictly positive kernels have received attention not only in functional analysis (see Subsection 3.2 in Paulsen-Raghupathi~\cite{PR})
but also in numerical analysis (see Micchelli~\cite{M}) 
and machine learning (see Guella~\cite{Guella}). 

Now, the purpose of this paper is to introduce a new method for dealing with strictly positive kernels. 
Our method is based on the structure of Fock spaces induced by kernels. 
We will apply the tensor algebra structure of the full Fock space  
to the theory of kernels. This is the main idea of this paper.  
As consequences, we obtain not only various examples of strictly positive kernels (Theorem \ref{thm:3-2} and examples in Section 4),   
but also a new proof of the universal approximation theorem for the Gaussian kernel
 (Theorem \ref{thm:5-1} and Corollary \ref{cor:5-1}).  

This paper is organized as follows. 
In Section 2, 
we introduce huge reproducing kernel Hilbert spaces from power series whose coefficients are strictly positive. 
Although all materials of Section 2 are well known to specialists in Hilbert space operator theory, 
we will give the details for the sake of general readers.   
In Section 3, we investigate relations between 
the strict positivity of kernels and reproducing kernel Hilbert spaces constructed in Section 2. 
In Section 4, various examples of strictly positive kernels are given as applications of results obtained in Section 3. 
In Section 5, we prove the universal approximation theorem for the Gaussian kernel with our method. 

\section{Preliminaries}
Let $k$ be a kernel, and 
let $\cH_k$ be the reproducing kernel Hilbert space generated by $k$. 
We will use the notation $k_x(y)=k(y,x)$ throughout the whole of this paper. 
We fix a sequence $\{a_n\}_{n\geq 0}$ such that $a_n> 0$ for any $n\geq 0$ and 
\[
\sum_{n=0}^{\infty}a_n\|k_x\|_{\cH_k}^{2n}=\sum_{n=0}^{\infty}a_nk(x,x)^n<\infty
\]
for any $x$ in $X$. 
Moreover, we set $\varphi(z)=\sum_{n=0}^{\infty}a_nz^n$.

In this section, we construct a huge reproducing kernel Hilbert space from $\cH_k$ and $\varphi$. 
The contents of this section are well known to specialists. 
For example, see Exercise (k) in p.\ 320 of Nikolski~\cite{Nik}, 
Chapters 5 and 7 in Paulsen-Raghupathi~\cite{PR} and Theorem 8.2 in Saitoh-Sawano~\cite{SS}. 
However, we give the details for the sake of general readers.

Let $\cH_k^n$ denote the reproducing kernel Hilbert space obtained 
by the pull-back construction with the $n$-fold tensor product Hilbert space 
\[
\cH_k^{\otimes n}=\cH_k\otimes \cdots \otimes \cH_k
\]
and the $n$-dimensional diagonal map 
\[
\Delta_n:X\to X^n,\ x\to (x,\ldots,x).
\]
More precisely, $\cH_k^n$ is equal to $\{F\circ \Delta_n:F\in \cH_k^{\otimes n}\}$ as vector spaces and 
its inner product is defined by 
\[
\la f,g \ra_{\cH_k^n}=\la P_{(\ker \Delta_n)^{\perp}}F, 
P_{(\ker \Delta_n)^{\perp}}G\ra_{\cH^{\otimes n}}\quad (f=F\circ \Delta_n,\ g=G\circ\Delta_n),
\]
where we define $\ker \Delta_n=\{F\in \cD^{\otimes n}:F\circ \Delta_n=0\}$ and 
$P_{(\ker \Delta_n)^{\perp}}$ denotes the orthogonal projection onto the orthogonal complement of $\ker \Delta_n$. 
Thus, we identify $\Delta_n$ with the linear map $F\mapsto F\circ \Delta_n$. 
Then, it is easy to see that $k_x^{\otimes n}$ belongs to $(\ker \Delta_n)^{\perp}$. 
Hence, for any function $f=F\circ \Delta_n$ in $\cH_k^n$, we have
\begin{align*}
\la f,k_x^n \ra_{\cH_k^n}
&=\la P_{(\ker \Delta_n)^{\perp}}F, 
P_{(\ker \Delta_n)^{\perp}}k_x^{\otimes n}\ra_{\cH^{\otimes n}}\\
&=\la F, k_x^{\otimes}\ra_{\cH^{\otimes n}}\\
&=F(x,\ldots,x)\\
&=f(x).
\end{align*}  
This concludes that $k_x^n$ is the reproducing kernel of $\cH_k^n$
(for the further details of this construction, see Theorems 5.7 and 5.16 in \cite{PR}). 

Next, 
let $\cF$ denote the Hilbert space with the inner product
\[
\la (f_0,f_1,\ldots)^{\top},(g_0,g_1,\ldots)^{\top} \ra_{\cF}
=\sum_{n=0}^{\infty}a_n\la f_n,g_n \ra_{\cH_k^n},
\]
where 
$f_n$ and $g_n$ are functions in $\cH_k^n$, and 
we set $\cH_k^0=\C$. 
We consider the map $\Gamma$ defined as follows: 
\[
\Gamma: 
\begin{pmatrix}
f_0\\
f_1\\
\vdots
\end{pmatrix}
\mapsto \sum_{n=0}^{\infty}a_nf_n\quad 
\left(
\begin{pmatrix}
f_0\\
f_1\\
\vdots
\end{pmatrix}
\in\cF
\right).
\] 
\begin{proposition}\label{prop:3-1}
$\Gamma$ is a linear map from $\cF$ 
to the vector space consisting of functions on $X$.
Moreover, $\ker \Gamma$ is closed. 
\end{proposition}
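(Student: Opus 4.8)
The plan is to realize each pointwise evaluation of $\Gamma$ as a bounded linear functional on $\cF$ represented by an inner product against a fixed vector; well-definedness, linearity, and the closedness of $\ker\Gamma$ will then all follow at once.

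First I would check that for each fixed $x\in X$ the sequence $(k_x^n)_{n\geq 0}$ lies in $\cF$. Since $k_x^n$ is the reproducing kernel of $\cH_k^n$, we have
\[
\|k_x^n\|_{\cH_k^n}^2=\la k_x^n,k_x^n\ra_{\cH_k^n}=k_x^n(x)=k(x,x)^n,
\]
so that $\sum_{n\geq 0}a_n\|k_x^n\|_{\cH_k^n}^2=\sum_{n\geq 0}a_nk(x,x)^n=\varphi(k(x,x))<\infty$ by the standing summability hypothesis (the case $n=0$ being consistent with $\cH_k^0=\C$). Hence $(k_x^n)_{n}\in\cF$.

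Next, using the reproducing property $f_n(x)=\la f_n,k_x^n\ra_{\cH_k^n}$ in each component, I would observe that for any $f=(f_0,f_1,\ldots)^{\top}\in\cF$,
\[
\sum_{n=0}^{\infty}a_nf_n(x)=\sum_{n=0}^{\infty}a_n\la f_n,k_x^n\ra_{\cH_k^n}=\la f,(k_x^n)_n\ra_{\cF}.
\]
By Cauchy--Schwarz in $\cF$ the right-hand side is finite, with $|\Gamma(f)(x)|\leq \|f\|_{\cF}\,\varphi(k(x,x))^{1/2}$; this shows that the series $\sum_n a_nf_n$ converges pointwise, so $\Gamma(f)$ is a genuine function on $X$ and $\Gamma$ maps $\cF$ into the space of functions on $X$. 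Linearity is immediate, since each term is linear in $f$. The key structural point extracted here is that evaluation at $x$, namely $f\mapsto\Gamma(f)(x)=\la f,(k_x^n)_n\ra_{\cF}$, is a bounded linear functional on $\cF$.

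Finally, for closedness I would write $\ker\Gamma=\bigcap_{x\in X}\{f\in\cF:\Gamma(f)(x)=0\}$. Each set in this intersection is the kernel of the bounded functional $f\mapsto\la f,(k_x^n)_n\ra_{\cF}$, hence a closed subspace of $\cF$, and an arbitrary intersection of closed sets is closed; therefore $\ker\Gamma$ is closed. There is no serious obstacle here: the entire argument hinges on recognizing that the assumption $\varphi(k(x,x))<\infty$ is exactly what places $(k_x^n)_n$ in $\cF$, after which the reproducing property converts each evaluation into an $\cF$-inner product. The only point requiring a little care is the interchange implicit in the identity $\sum_n a_n\la f_n,k_x^n\ra_{\cH_k^n}=\la f,(k_x^n)_n\ra_{\cF}$, which is justified by the absolute convergence furnished by the Cauchy--Schwarz bound above.
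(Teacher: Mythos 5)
Your proof is correct, and it rests on the same two ingredients as the paper's: the computation $\|k_x^n\|_{\cH_k^n}^2=k_x^n(x)=k(x,x)^n$ and the Cauchy--Schwarz inequality. The organization, however, is genuinely different. The paper estimates the tails $\bigl|\sum_{\ell=n+1}^m a_\ell f_\ell(x)\bigr|$ directly, obtains convergence of $\sum_n a_n f_n(x)$ by the Cauchy criterion, and then extracts the bound $|(\Gamma F)(x)|\le\|F\|_{\cF}\left(\sum_n a_n k(x,x)^n\right)^{1/2}$, from which closedness of $\ker\Gamma$ follows since point evaluations are then bounded. You instead verify first that the sequence $(1,k_x,k_x^2,\ldots)^{\top}$ lies in $\cF$ --- which is exactly what the standing hypothesis $\sum_n a_nk(x,x)^n<\infty$ provides --- and then recognize evaluation at $x$ as the $\cF$-inner product against this fixed vector, so that well-definedness, the same bound, and closedness (as an intersection of kernels of bounded functionals) all fall out of one identity; the absolute-convergence point you flag is indeed the only thing to check, and it is covered by the termwise Cauchy--Schwarz estimate that makes the inner product on $\cF$ well defined in the first place. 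What your packaging buys is that your representing vector $(1,k_x,k_x^2,\ldots)^{\top}$ is precisely the element the paper introduces only later, in the proof that $\varphi(k_x)=\sum_n a_nk_x^n$ is the reproducing kernel of $\varphi(\cH_k)$: your identity $(\Gamma F)(x)=\la F,(k_x^n)_n\ra_{\cF}$ makes that subsequent proposition nearly immediate after the pull-back construction, since it already exhibits $(1,k_x,k_x^2,\ldots)^{\top}$ as orthogonal to $\ker\Gamma$-type relations. The paper's tail estimate, by contrast, is marginally more self-contained at this point of the exposition, but has to redo the vector computation when the reproducing kernel is identified.
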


\begin{proof}
For any $F=(f_0,f_1,\ldots)^{\top}$ in $\cF$, 
we have 
\begin{align}\label{eq:2-1}
\left| \sum_{\ell=n+1}^ma_{\ell}f_{\ell}(x) \right|
&\leq 
\sum_{\ell=n+1}^m\left|a_{\ell}f_{\ell}(x)\right|\notag \\
&\leq \sum_{\ell=n+1}^ma_{\ell}\|f_{\ell}\|_{\cH_k^{\ell}}\|k_x^{\ell}\|_{\cH_k^{\ell}}\notag \\
&\leq \left(\sum_{\ell =n+1}^ma_{\ell}\|f_{\ell}\|_{\cH_k^{\ell}}^2\right)^{1/2}
\left(\sum_{\ell=n+1}^ma_{\ell}\|k_x^{\ell}\|_{\cH_k^{\ell}}^2\right)^{1/2}\notag \\
&= \left(\sum_{\ell=n+1}^ma_{\ell}\|f_{\ell}\|_{\cH_k^{\ell}}^2\right)^{1/2}
\left(\sum_{\ell=n+1}^ma_{\ell}\|k_x\|_{\cH_k}^{2\ell}\right)^{1/2}.
\end{align}
Hence, 
$\sum_{n=0}^{\infty}a_nf_n(x)$ converges. This concludes that  
$\Gamma$ is a linear map from $\cF$ 
to the vector space consisting of functions on $X$. 
Moreover, by (\ref{eq:2-1}), we have
\begin{equation}\label{eq:2-2}
|(\Gamma F)(x)|\leq \|F\|_{\cF}\left(\sum_{n=0}^{\infty}a_nk(x,x)^n\right)^{1/2}.
\end{equation}
This inequality concludes that $\ker \Gamma$ is closed. 
\end{proof}

By Proposition \ref{prop:3-1}, the pull-back construction can be applied to $\Gamma$. 
\begin{definition}
We define $\varphi(\cH_k)$ as the reproducing kernel Hilbert space obtained by the pull-back construction 
with the linear map $\Gamma$, that is, 
$\varphi(\cH_k)$ is equal to the range of $\Gamma$ as vector spaces and 
its inner product is defined by 
\[
\la f,g \ra_{\varphi(\cH_k)}=\la P_{(\ker \Gamma)^{\perp}}F, 
P_{(\ker \Gamma)^{\perp}}G\ra_{\cF}\quad (f=\Gamma F,\ g=\Gamma G,\ F,G\in \cF). 
\]
\end{definition}

We summarize basic properties of $\varphi(\cH_k)$.  
\begin{proposition}
$\varphi(\cH_k)$ is a reproducing kernel Hilbert space consisting of functions on $X$. 
More precisely, for any $f$ in $\varphi(\cH_k)$, 
there exists a vector $(f_0,f_1,\ldots,)^{\top}$ in $\cF$ 
such that 
\[
f=\sum_{n=0}^{\infty}a_nf_n. 
\]
Moreover, 
the reproducing kernel of $\varphi(\cH_k)$ is 
\[
\sum_{n=0}^{\infty}a_nk_x^n= \varphi(k_x),
\]
that is, 
\[
f(x)=\la f, \varphi(k_x)\ra_{\varphi(\cH_k)}
\]
for any $f$ in $\varphi(\cH_k)$ and any $x$ in $X$.
\end{proposition}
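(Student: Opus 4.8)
The plan is to extract every assertion directly from the pull-back construction, leaning on the bound (\ref{eq:2-2}) and on the reproducing property of each $\cH_k^n$ established above. The representation $f=\sum_{n=0}^{\infty}a_nf_n$ comes for free: by definition $\varphi(\cH_k)=\ran\Gamma$, so any $f\in\varphi(\cH_k)$ equals $\Gamma F$ for some $F=(f_0,f_1,\dots)^{\top}\in\cF$, and $\Gamma F=\sum_n a_nf_n$ is exactly the asserted expansion. To see that $\varphi(\cH_k)$ is a Hilbert space, I would first check that its inner product is well defined---if $\Gamma F=\Gamma F'$ then $F-F'\in\ker\Gamma$, hence $P_{(\ker\Gamma)^{\perp}}F=P_{(\ker\Gamma)^{\perp}}F'$---and then observe that $\Gamma$ restricts to a linear bijection of $(\ker\Gamma)^{\perp}$ onto $\ran\Gamma$ which is isometric by construction. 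Since $\ker\Gamma$ is closed by Proposition \ref{prop:3-1}, the subspace $(\ker\Gamma)^{\perp}$ is complete, and completeness transfers across the isometry to $\varphi(\cH_k)$.

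For the reproducing kernel Hilbert space property, I would, for each $f$, choose the representative $F\in(\ker\Gamma)^{\perp}$, for which $\|f\|_{\varphi(\cH_k)}=\|F\|_{\cF}$. Feeding this into (\ref{eq:2-2}) gives $|f(x)|\le\|f\|_{\varphi(\cH_k)}(\sum_n a_nk(x,x)^n)^{1/2}$, so evaluation at each $x$ is bounded and $\varphi(\cH_k)$ is an RKHS.

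It remains to identify the kernel. I would set $K_x=(k_x^0,k_x^1,k_x^2,\dots)^{\top}$ with $k_x^0=1$. Since $\|k_x^n\|_{\cH_k^n}^2=k_x^n(x)=k(x,x)^n$ by the reproducing property of $\cH_k^n$, the standing assumption $\sum_n a_nk(x,x)^n<\infty$ says precisely that $K_x\in\cF$, and then $\Gamma K_x=\sum_n a_nk_x^n=\varphi(k_x)$, so $\varphi(k_x)\in\varphi(\cH_k)$. The crucial step---and the one I expect to be the heart of the argument---is that $K_x\in(\ker\Gamma)^{\perp}$: for any $G=(g_0,g_1,\dots)^{\top}\in\ker\Gamma$ one computes $\la G,K_x\ra_{\cF}=\sum_n a_n\la g_n,k_x^n\ra_{\cH_k^n}=\sum_n a_ng_n(x)=(\Gamma G)(x)=0$. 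Thus $P_{(\ker\Gamma)^{\perp}}K_x=K_x$, and for $f=\Gamma F$ the definition of the inner product yields $\la f,\varphi(k_x)\ra_{\varphi(\cH_k)}=\la P_{(\ker\Gamma)^{\perp}}F,K_x\ra_{\cF}=\la F,K_x\ra_{\cF}=(\Gamma F)(x)=f(x)$, which is the reproducing property. The only subtlety to watch is the interchange of inner product and infinite sum defining $\la G,K_x\ra_{\cF}$, which is legitimate exactly because both $G$ and $K_x$ lie in $\cF$, so that their pairing is the convergent series above.
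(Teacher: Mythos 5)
Your proof is correct and follows essentially the same route as the paper: the heart in both cases is the observation that $K_x=(1,k_x,k_x^2,\ldots)^{\top}$ lies in $(\ker\Gamma)^{\perp}$ (by pairing it against an arbitrary element of $\ker\Gamma$ and using the reproducing property of each $\cH_k^n$), followed by the computation $\la f,\varphi(k_x)\ra_{\varphi(\cH_k)}=\la F,K_x\ra_{\cF}=f(x)$. The only difference is that you also spell out the standard pull-back facts (well-definedness of the inner product, completeness via the isometry with $(\ker\Gamma)^{\perp}$, and boundedness of point evaluations from (\ref{eq:2-2})), which the paper leaves to its references; this is harmless and, if anything, makes the argument more self-contained.
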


\begin{proof}
We shall show that $\varphi(k_x)$ is the reproducing kernel of $\varphi(\cH_k)$. 
If $(h_0,h_1,h_2,\ldots)^{\top}$ is a vector in $\ker \Gamma$, then
\[
\la (h_0,h_1,h_2,\ldots)^{\top},(1,k_x,k_x^2,\ldots)^{\top} \ra_{\cF}
=\sum_{n=0}^{\infty}a_n\la h_n,k_x^n \ra_{\cH_k^n}=\sum_{n=0}^{\infty}a_nh_n(x)=0.
\]
Hence, $(1,k_x,k_x^2,\ldots)^{\top}$ belongs to $(\ker \Gamma)^{\perp}$. 
Next, for any function $f=\Gamma (f_0,f_1,f_2,\ldots)^{\top}$ in $\varphi(\cH_k)$, we have
\begin{align*}
\la f, \varphi(k_x) \ra_{\varphi(\cH_k)}
&=\la P_{(\ker \Gamma)^{\perp}} (f_0,f_1,f_2,\ldots)^{\top}, 
P_{(\ker \Gamma)^{\perp}}(1,k_x,k_x^2,\ldots)^{\top}\ra_{\cF}\\
&=\la (f_0,f_1,f_2,\ldots)^{\top}, (1,k_x,k_x^2,\ldots)^{\top}\ra_{\cF}\\
&=\sum_{n=0}^{\infty}a_n\la f_n,k_x^n \ra_{\cH_k^n}\\
&=\sum_{n=0}^{\infty}a_nf_n(x)\\
&=f(x).
\end{align*}  
This concludes the proof. 
\end{proof}

\section{Strictly positive kernels}

In this section, we will investigate relations between the strict positivity of $\varphi(k)$ and the structure of $\varphi(\cH_k)$. 
Some of results obtained in Kuwahara-S~\cite{KS} are generalized into the setting of this paper.  

\begin{lemma}[\cite{KS}]\label{lem:3-1}
Let $\psi$ be a function in $\cH_k$. Then, 
the multiplication operator $M_{\psi}$ with symbol $\psi$ 
is a densely defined closable linear operator in $\varphi(\cH_k)$. 
In particular, 
the adjoint operator $M_{\psi}^{\ast}$ is a densely defined closed linear operator in $\varphi(\cH_k)$, and 
every $\varphi(k_x)$ is an eigenfunction of $M_{\psi}^{\ast}$. 
More precisely, 
\[
M_{\psi}^{\ast}\varphi(k_x)=\overline{\psi(x)}\varphi(k_x).
\]
\end{lemma}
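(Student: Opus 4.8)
The plan is to treat the eigenvalue identity as an essentially formal consequence of the reproducing property, and to spend the real effort on giving $M_{\psi}$ a concrete dense domain on which $\psi\cdot f$ returns to $\varphi(\cH_k)$. The device that makes this work is the observation that multiplication by a symbol $\psi\in\cH_k$ \emph{shifts the Fock levels}: it maps $\cH_k^n$ into $\cH_k^{n+1}$ and is a contraction up to the factor $\|\psi\|_{\cH_k}$.

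First I would establish this level-shift estimate. Given $f_n\in\cH_k^n$, write $f_n=F\circ\Delta_n$ with $F\in(\ker\Delta_n)^{\perp}$, so that $\|f_n\|_{\cH_k^n}=\|F\|_{\cH_k^{\otimes n}}$. Since $\psi\otimes F\in\cH_k^{\otimes(n+1)}$ and $(\psi\otimes F)\circ\Delta_{n+1}=\psi\cdot f_n$, the pull-back (quotient) norm obeys
\[
\|\psi f_n\|_{\cH_k^{n+1}}\le\|\psi\otimes F\|_{\cH_k^{\otimes(n+1)}}=\|\psi\|_{\cH_k}\|f_n\|_{\cH_k^n}.
\]
In particular $\psi\cdot k_x^n\in\cH_k^{n+1}$, and multiplication by $\psi$ is level-increasing with controlled norm.

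Second, I would take as domain the image $\cD_0=\Gamma(\cF_{\mathrm{fin}})$ of the finitely supported vectors $\cF_{\mathrm{fin}}=\{(f_0,\dots,f_N,0,0,\dots)^{\top}\}$. For such $f=\sum_{n=0}^N a_n f_n$ one has $\psi f=\sum_{n=0}^N a_n\,\psi f_n$, and setting $g_{n+1}=(a_n/a_{n+1})\,\psi f_n$ and $g_0=0$ exhibits $\psi f=\sum_m a_m g_m$ as a finite vector in $\cF$, hence as an element of $\varphi(\cH_k)$; thus $M_{\psi}f:=\psi f$ is well defined on $\cD_0$. Density of $\cD_0$ follows because $\cF_{\mathrm{fin}}$ is dense in $\cF$ and $\Gamma$ is a surjective contraction onto $\varphi(\cH_k)$ (indeed $\|\Gamma F\|_{\varphi(\cH_k)}=\|P_{(\ker\Gamma)^{\perp}}F\|_{\cF}\le\|F\|_{\cF}$), so $\Gamma(\cF_{\mathrm{fin}})$ is dense. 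This shows $M_{\psi}$ is densely defined.

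Finally, the eigenfunction identity and closability come together. For every $f\in\cD_0$ the reproducing property yields
\[
\la M_{\psi}f,\varphi(k_x)\ra_{\varphi(\cH_k)}=(M_{\psi}f)(x)=\psi(x)f(x)=\psi(x)\la f,\varphi(k_x)\ra_{\varphi(\cH_k)}=\la f,\overline{\psi(x)}\varphi(k_x)\ra_{\varphi(\cH_k)},
\]
so $\varphi(k_x)\in\mathrm{dom}\,M_{\psi}^{\ast}$ and $M_{\psi}^{\ast}\varphi(k_x)=\overline{\psi(x)}\varphi(k_x)$. Since the kernels $\{\varphi(k_x)\}_{x\in X}$ span a dense subspace, $M_{\psi}^{\ast}$ is densely defined, which is exactly the criterion for $M_{\psi}$ to be closable; an adjoint is automatically closed. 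The step I expect to require the most care is the level-shift estimate: one must check that $\psi\otimes F$ genuinely represents $\psi\cdot f_n$ on the diagonal and that passing to the minimal representative in $(\ker\Delta_n)^{\perp}$ legitimately converts the quotient norm into the clean bound $\|\psi\|_{\cH_k}\|f_n\|_{\cH_k^n}$.
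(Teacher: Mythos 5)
Your proof is correct and takes essentially the same approach as the paper's: the same domain (the image under $\Gamma$ of the finitely supported vectors), the same observation that multiplication by $\psi$ shifts $\cH_k^n$ into $\cH_k^{n+1}$ via $F\mapsto\psi\otimes F$, and the same re-indexing trick $g_{n+1}=(a_n/a_{n+1})\psi f_n$ showing $\psi f\in\varphi(\cH_k)$. The extra details you supply --- the norm bound $\|\psi f_n\|_{\cH_k^{n+1}}\le\|\psi\|_{\cH_k}\|f_n\|_{\cH_k^n}$, the density of the domain, and the closability criterion via the dense domain of $M_{\psi}^{\ast}$ --- are welcome elaborations of the steps the paper dismisses as ``easy to see,'' not a different route.
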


\begin{proof}
We define the bounded linear operator $L_{\psi}$ as follows:  
\[
L_{\psi}:
\cH_k^{\otimes n} \to\cH_k^{\otimes n+1}, 
\quad F\mapsto \psi \otimes F.
\]
Then, the following diagram commutes:
\[
\begin{CD}
\cH_k^{\otimes n} @>{L_{\psi}}>> \cH_k^{\otimes n+1} \\
@V{\Delta_n}VV    @VV{\Delta_{n+1}}V \\
\cH_k^n   @>>{M_{\psi}|_{\cH_k^n}}>  \cH_k^{n+1},
\end{CD}
\]
where $\Delta_n$ is identified with the linear map $F\mapsto F\circ \Delta_n$. 
Hence, 
for any function $f_n$ in $\cH_k^n$, 
$\psi f_n$ belongs to $\cH_k^{n+1}$. 
Let $F=(f_0,f_1,\ldots,f_N,0,\ldots)^{\top}$ be a vector with finite support in $\cF$. 
We set $f=\Gamma F$. 
Then, 
\[\psi f=\psi \sum_{n=0}^Na_nf_n=\sum_{n=0}^Na_n\psi f_n=
\sum_{n=0}^Na_{n+1}\dfrac{a_n}{a_{n+1}}\psi f_n=\sum_{n=1}^{N+1}a_n\dfrac{a_{n-1}}{a_n}\psi f_{n-1},
\]
where we note that $\psi f_{n-1}$ belongs to $\cH_k^n$. 
Hence, setting 
\[
G=\left(0, \dfrac{a_0}{a_1}\psi f_0,\dfrac{a_1}{a_2}\psi f_1,\ldots, \dfrac{a_N}{a_{N+1}}\psi f_N,0,\ldots\right)^{\top},
\] 
$G$ belongs to $\cF$ and $\Gamma G=\psi f$, that is, $\psi f$ belongs to $\varphi(\cH_k)$. 
Therefore, $M_{\psi}$ is a densely defined linear operator in $\varphi(\cH_k)$. 
Moreover, 
it is easy to see that $M_{\psi}$ is closable and $M_{\psi}^{\ast}\varphi(k_x)=\overline{\psi(x)}\varphi(k_x)$. 
\end{proof}

We extract the next definition and theorem from the proof of the main theorem in \cite{KS} 
where exponentials of de Branges-Rovnyak kernels were discussed.

\begin{definition}[\cite{KS}]
Let $\cH_k$ be a reproducing kernel Hilbert space on a set $X$. 
Then, $X$ is said to be finitely separated by $\cH_k$ if, 
for any positive integer $n$ and 
for any $n$ distinct points $x_1,\ldots,x_n$ in $X$, 
there exists a function $\psi$ in $\cH_k$ such that 
$\psi(x_i)\neq \psi(x_j)$ whenever $i\neq j$. 
\end{definition}
 
\begin{theorem}[\cite{KS}]\label{thm:3-1}
Let $\cH_k$ be a reproducing kernel Hilbert space on a set $X$. 
If $X$ is finitely separated by $\cH_k$, then $\varphi(k)$ is strictly positive definite. 
\end{theorem}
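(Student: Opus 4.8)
The plan is to convert strict positivity of $\varphi(k)$ into a linear-independence statement about the kernel sections $\varphi(k_{x_1}),\dots,\varphi(k_{x_n})$ inside $\varphi(\cH_k)$, and then to extract that independence from Lemma~\ref{lem:3-1} through the classical fact that eigenvectors attached to distinct eigenvalues are linearly independent.

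First I would record the standard reformulation of strict positivity. For $n$ distinct points $x_1,\dots,x_n$ and any $(c_1,\dots,c_n)\in\C^n$, using $\varphi(k)(x_i,x_j)=\la \varphi(k_{x_j}),\varphi(k_{x_i})\ra_{\varphi(\cH_k)}$ from the reproducing property, one has
\[
\sum_{i,j=1}^n c_i\overline{c_j}\,\varphi(k)(x_i,x_j)
=\Bigl\|\sum_{j=1}^n \overline{c_j}\,\varphi(k_{x_j})\Bigr\|_{\varphi(\cH_k)}^2 .
\]
Since $\overline{c}$ ranges over $\C^n\setminus\{\mathbf 0\}$ as $c$ does, $\varphi(k)$ is strictly positive definite if and only if, for every finite collection of distinct points, the vectors $\varphi(k_{x_1}),\dots,\varphi(k_{x_n})$ are linearly independent in $\varphi(\cH_k)$.

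Next I would feed in the hypothesis. Because $X$ is finitely separated by $\cH_k$, there is a function $\psi\in\cH_k$ with $\psi(x_i)\neq\psi(x_j)$ whenever $i\neq j$, so the scalars $\overline{\psi(x_1)},\dots,\overline{\psi(x_n)}$ are pairwise distinct. By Lemma~\ref{lem:3-1}, each $\varphi(k_{x_i})$ lies in the domain of the closed operator $M_{\psi}^{\ast}$ and satisfies $M_{\psi}^{\ast}\varphi(k_{x_i})=\overline{\psi(x_i)}\,\varphi(k_{x_i})$. Thus the kernel sections are eigenfunctions of a single operator for pairwise distinct eigenvalues, which forces their linear independence; combined with the reformulation above, this delivers the strict positivity.

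The delicate point will be the unboundedness of $M_{\psi}^{\ast}$. The usual argument that eigenvectors with distinct eigenvalues are independent is purely algebraic — one applies the operator repeatedly to a hypothetical relation $\sum_i d_i\varphi(k_{x_i})=0$ and inverts the resulting Vandermonde system — so I must confirm that this induction never exits the domain of $M_{\psi}^{\ast}$. This is automatic here, since $M_{\psi}^{\ast}$ carries each $\varphi(k_{x_i})$ to a scalar multiple of itself; every iterate therefore remains a finite combination of the $\varphi(k_{x_i})$ and stays in the domain, so no closedness or boundedness subtlety actually interferes.
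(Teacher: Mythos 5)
Your route is essentially the paper's own: the paper likewise reduces strict positivity of $\varphi(k)$ to linear independence of the sections $\varphi(k_{x_1}),\ldots,\varphi(k_{x_n})$, and then applies powers of $M_{\psi}^{\ast}$ to a hypothetical vanishing combination, inverting the Vandermonde system built from the distinct values $\overline{\psi(x_1)},\ldots,\overline{\psi(x_n)}$ supplied by the finite-separation hypothesis and Lemma \ref{lem:3-1}. Your opening reduction is in fact spelled out more fully than in the paper (which merely asserts that linear independence suffices), and your remark that the iteration never exits the domain of the unbounded operator $M_{\psi}^{\ast}$ makes explicit a point the paper leaves silent.

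There is, however, one step you omit that is genuinely required. Inverting the Vandermonde system yields only $d_i\,\varphi(k_{x_i})=0$ for each $i$; to conclude $d_i=0$ --- equivalently, to invoke the classical fact, which concerns \emph{nonzero} eigenvectors --- you must know that no section $\varphi(k_{x_i})$ is the zero function. A family containing the zero vector is never linearly independent, so this cannot be passed over. The paper closes this gap explicitly: if $\varphi(k_{x_j})=0$, then $f(x_j)=\la f,\varphi(k_{x_j})\ra_{\varphi(\cH_k)}=0$ for every $f$ in $\varphi(\cH_k)$, contradicting the fact that $\varphi(\cH_k)$ contains the nonzero constant functions (recall $\cH_k^0=\C$ and $a_0>0$). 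Even more directly, one can note that $\varphi(k_{x_i})(x_i)=\varphi(k(x_i,x_i))=\sum_{n\geq 0}a_nk(x_i,x_i)^n\geq a_0>0$. With this one-line addition your argument is complete and coincides with the paper's proof.
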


\begin{proof}
It suffices to show that 
$\{\varphi(k_x)\}_{j=1}^n$ is linearly independent 
for any $n$ in $\mathbb N$ and any $n$ distinct points $x_1,\ldots, x_n$ in $X$. 
Suppose that 
\[
\sum_{j=1}^nc_j\varphi(k_{x_j})=0
\]
for some $n$ in $\mathbb N$, some $n$ distinct points $x_1,\ldots, x_n$ in $X$, 
and some $c_1,\ldots,c_n$ in $\C$. 
Then, for any function $\psi$ in $\cH_k$, 
by Lemma \ref{lem:3-1}, we have 
\begin{align*}
\begin{pmatrix}
1 & \cdots & 1\\
\overline{\psi(x_1)} & \cdots & \overline{\psi(x_n)}\\
\vdots & \vdots & \vdots \\
\overline{\psi(x_1)}^{n-1} & \cdots & \overline{\psi(x_n)}^{n-1}
\end{pmatrix}
\begin{pmatrix}
c_1\varphi(k_{x_1})\\
c_2\varphi(k_{x_2})\\
\vdots\\
c_n\varphi(k_{x_n})
\end{pmatrix}
&=
\begin{pmatrix}
\sum_{j=1}^nc_j\varphi(k_{x_j})\\
\sum_{j=1}^n\overline{\psi(x_j)}c_j\varphi(k_{x_j})\\
\vdots\\
\sum_{j=1}^nc_j\overline{\psi(x_j)}^{n-1}\varphi(k_{x_j})
\end{pmatrix}\\
&=
\begin{pmatrix}
\sum_{j=1}^nc_j\varphi(k_{x_j})\\
M_{\psi}^{\ast}\sum_{j=1}^nc_j\varphi(k_{x_j})\\
\vdots\\
(M_{\psi}^{\ast})^{n-1}\sum_{j=1}^nc_j\varphi(k_{x_j})
\end{pmatrix}\\
&=\mathbf{0}.
\end{align*}
However, by the assumption, there exists a function $\psi$ in $\cH_k$ such that 
\[
\prod_{1\leq i<j\leq n}(\psi(x_i)-\psi(x_j))\neq 0.
\]
Then, the Vandermonde matrix
\[
\begin{pmatrix}
1 & \cdots & 1\\
\overline{\psi(x_1)} & \cdots & \overline{\psi(x_n)}\\
\vdots & \vdots & \vdots \\
\overline{\psi(x_1)}^{n-1} & \cdots & \overline{\psi(x_n)}^{n-1}
\end{pmatrix}
\]
is nonsingular. 
Therefore, we have that 
\[
\begin{pmatrix}
c_1\varphi(k_{x_1})\\
c_2\varphi(k_{x_2})\\
\vdots\\
c_n\varphi(k_{x_n})
\end{pmatrix}=\mathbf{0}.
\]
This concludes that $c_1=\cdots=c_n=0$. 
Indeed, 
if $\varphi(k_{x_j})=0$ as a function for some $1\leq j \leq n$, 
then we would have $f(x_j)=0$ for any $f$ in $\varphi(\cH_k)$.  
However, $\varphi(\cH_k)$ includes $\C$ by definition. 
Hence, $\varphi(k_{x_j})\neq 0$ for any $1\leq j\leq n$. 
\end{proof}

Further, in the case where $\varphi(z)=e^z$, we have the following useful results 
(cf. Theorem 4.1 in Guella~\cite{Guella}). 

\begin{lemma}\label{lem:3-2}
Let $\cH_k$ be a reproducing kernel Hilbert space on a set $X$. 
If $\exp tk$ is strictly positive definite for some $t>0$, 
then so is $\exp (-t\|k_x-k_y\|_{\cH_k}^2)$. 
\end{lemma}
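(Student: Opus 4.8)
The plan is to reduce the assertion to the strict form of the Schur product theorem after expanding the squared norm appearing in the exponent. Using the reproducing property $\langle k_x, k_y\rangle_{\cH_k}=k(y,x)$ together with $\|k_x\|_{\cH_k}^2=k(x,x)$, I would first record the identity
\[
\|k_x-k_y\|_{\cH_k}^2=k(x,x)+k(y,y)-2\,\mathrm{Re}\,k(x,y),
\]
so that the candidate kernel factors into a diagonal part and an off-diagonal part:
\[
\exp\!\big(-t\|k_x-k_y\|_{\cH_k}^2\big)=e^{-tk(x,x)}\,e^{-tk(y,y)}\,e^{2t\,\mathrm{Re}\,k(x,y)}.
\]

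Next I would fix $n$ distinct points $x_1,\dots,x_n$ and a nonzero vector $(c_1,\dots,c_n)\in\C^n$, and absorb the diagonal factors into the coefficients by setting $d_i=c_ie^{-tk(x_i,x_i)}$. Since each $e^{-tk(x_i,x_i)}>0$, the vector $(d_1,\dots,d_n)$ is again nonzero, and the quadratic form attached to $\exp(-t\|k_x-k_y\|_{\cH_k}^2)$ becomes $\sum_{i,j}d_i\overline{d_j}\,e^{2t\,\mathrm{Re}\,k(x_i,x_j)}$. It therefore suffices to prove that the Hermitian matrix $\big[\,e^{2t\,\mathrm{Re}\,k(x_i,x_j)}\,\big]_{i,j}$ is strictly positive definite.

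For this I would write $2\,\mathrm{Re}\,k(x_i,x_j)=k(x_i,x_j)+k(x_j,x_i)$, so that
\[
e^{2t\,\mathrm{Re}\,k(x_i,x_j)}=e^{tk(x_i,x_j)}\cdot e^{tk(x_j,x_i)},
\]
exhibiting the matrix as the entrywise product $K\circ\overline{K}$, where $K=\big[\,e^{tk(x_i,x_j)}\,\big]$ and $\overline{K}=K^{\top}$ (the last equality because $k$ is self-adjoint, hence so is $\exp tk$). By hypothesis $K$ is strictly positive definite, and $\overline{K}$ is then positive semidefinite with strictly positive diagonal entries $e^{tk(x_i,x_i)}$.

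The heart of the argument, and the step I expect to be the main obstacle, is the strict Schur product theorem: the Hadamard product of a strictly positive definite matrix with a positive semidefinite matrix all of whose diagonal entries are nonzero is again strictly positive definite. I would justify it by taking a spectral decomposition $\overline{K}=\sum_m\lambda_m w_m w_m^{\ast}$ with $\lambda_m\geq 0$, writing $K\circ\overline{K}=\sum_m\lambda_m D_{w_m}KD_{w_m}^{\ast}$ with $D_{w_m}=\mathrm{diag}(w_m)$, and noting that for a nonzero $v$ one may pick an index $i$ with $v_i\neq 0$; positivity of the $i$-th diagonal entry of $\overline{K}$ forces some $\lambda_m>0$ with $w_{m,i}\neq 0$, and for that $m$ the vector $D_{w_m}^{\ast}v$ is nonzero, so $v^{\ast}D_{w_m}KD_{w_m}^{\ast}v>0$ by strict positivity of $K$. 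Granting this, $K\circ\overline{K}$ is strictly positive definite, the reduced quadratic form is strictly positive, and unwinding the substitution $d_i=c_ie^{-tk(x_i,x_i)}$ yields the strict positivity of $\exp(-t\|k_x-k_y\|_{\cH_k}^2)$.
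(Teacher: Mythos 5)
Your proof is correct and follows essentially the same route as the paper: expand $\|k_x-k_y\|_{\cH_k}^2$, absorb the diagonal factors $e^{-tk(x_i,x_i)}$ into the coefficients, and recognize $\bigl[e^{2t\operatorname{Re}k(x_i,x_j)}\bigr]$ as the Hadamard product of $\bigl[e^{tk(x_i,x_j)}\bigr]$ with its conjugate, handled by the Schur product theorem. The only difference is that the paper simply cites the strict form of that theorem (Theorem 7.5.3 in Horn--Johnson), whereas you prove it inline via the spectral decomposition argument; that is a fine, self-contained substitute.
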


\begin{proof}
Suppose that $\exp tk$ is strictly positive definite. 
Then, since
\begin{align*}
\exp (2t\operatorname{Re}\la k_x,k_y\ra _{\cH_k})
&=\exp (t\la k_x,k_y\ra _{\cH_k})\exp (\overline{t\la k_x,k_y\ra _{\cH_k}})\\
&=\overline{\exp{tk(x,y)}}\exp tk(x,y), 
\end{align*}
$\exp (2t\operatorname{Re}\la k_x,k_y\ra _{\cH_k})$ is strictly positive definite 
by the Schur product theorem (Theorem 7.5.3 in Horn-Johnson~\cite{HJ}). 
Hence, for any distinct points $x_1,\ldots,x_n$ in $X$ and any $(c_1,\ldots, c_n)$ in $\C^n\setminus\{\mathbf{0}\}$, 
we have 
\begin{align*}
\sum_{i,j=1}^nc_i\overline{c_j}\exp (-t\|k_{x_i}-k_{x_j}\|_{\cH_k}^2)
&=\sum_{i,j=1}^nc_i\overline{c_j}e^{-t\|k_{x_i}\|_{\cH_k}^2}e^{-t\|k_{x_j}\|_{\cH_k}^2}
\exp (2t\operatorname{Re}\la k_{x_i},k_{x_j}\ra _{\cH_k})>0. 
\end{align*}
This concludes the proof. 
\end{proof}

\begin{theorem}\label{thm:3-2}
Let $\Phi$ be any map from a set $X$ to a Hilbert space $\cH$. 
If $\Phi$ is injective, then $\exp t\la \Phi(y),\Phi(x) \ra_{\cH}$ is strictly positive definite for any $t>0$. 
Moreover, then, so is $\exp (-t\| \Phi(x)-\Phi(y) \|_{\cH}^2)$. 
\end{theorem}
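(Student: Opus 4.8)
The plan is to realize $\exp t\la\Phi(y),\Phi(x)\ra_{\cH}$ as $\varphi(k)$ for a suitable Gram kernel $k$ and the entire function $\varphi=\exp$, and then to read off strict positivity from Theorem \ref{thm:3-1} by checking the finite separation hypothesis. So I would first set $k(x,y)=\la\Phi(y),\Phi(x)\ra_{\cH}$, which is a kernel on $X$ with feature map $\Phi$. Fix $t>0$; then $tk$ is again a kernel, with feature map $\sqrt{t}\,\Phi$, and choosing $\varphi(z)=e^z$ (so that $a_n=1/n!>0$ and the summability requirement $\sum_n a_nk(x,x)^n=e^{k(x,x)}<\infty$ is automatic) gives $\varphi(tk)=\exp t\la\Phi(y),\Phi(x)\ra_{\cH}$. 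By Theorem \ref{thm:3-1} it therefore suffices to verify that $X$ is finitely separated by $\cH_{tk}$.

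The separation step is where the work lies, and I would handle it through the feature-map description of the RKHS. For any points $y_1,\dots,y_m$ and scalars $\alpha_1,\dots,\alpha_m$, the function $\psi=\sum_{\ell}\alpha_\ell (tk)_{y_\ell}\in\cH_{tk}$ satisfies $\psi(x)=t\sum_\ell\alpha_\ell\la\Phi(y_\ell),\Phi(x)\ra_{\cH}=\la v,\Phi(x)\ra_{\cH}$ with $v=t\sum_\ell\alpha_\ell\Phi(y_\ell)$; in particular every $v$ in the span of finitely many feature vectors arises this way. Thus, given $n$ distinct points $x_1,\dots,x_n$, it is enough to produce $v$ in the finite-dimensional space $V=\operatorname{span}\{\Phi(x_1),\dots,\Phi(x_n)\}$ with $\la v,\Phi(x_i)-\Phi(x_j)\ra_{\cH}\neq0$ for all $i\neq j$, for then $\psi_v(x)=\la v,\Phi(x)\ra_{\cH}$ separates the points. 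Here the injectivity of $\Phi$ is exactly the hypothesis that makes this possible: it forces each difference $\Phi(x_i)-\Phi(x_j)$ to be a nonzero vector of $V$, so each collision set $\{v\in V:\la v,\Phi(x_i)-\Phi(x_j)\ra_{\cH}=0\}$ is a \emph{proper} subspace of $V$. Since a vector space over $\C$ is never a finite union of proper subspaces, some $v\in V$ avoids them all, and the first assertion follows. I expect this finite-union-of-hyperplanes argument, together with the identification of $\cH_{tk}$ with linear functionals of the feature map, to be the only substantive point; the rest is bookkeeping.

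For the second assertion I would invoke Lemma \ref{lem:3-2} applied to the kernel $k(x,y)=\la\Phi(y),\Phi(x)\ra_{\cH}$. The first part already shows that $\exp tk$ is strictly positive definite for the given $t>0$, so Lemma \ref{lem:3-2} yields the strict positivity of $\exp(-t\|k_x-k_y\|_{\cH_k}^2)$. A direct computation with the reproducing property then identifies the exponent:
\[
\|k_x-k_y\|_{\cH_k}^2=k(x,x)-k(x,y)-k(y,x)+k(y,y)=\|\Phi(x)-\Phi(y)\|_{\cH}^2,
\]
so that $\exp(-t\|\Phi(x)-\Phi(y)\|_{\cH}^2)$ is strictly positive definite, which completes the proof.
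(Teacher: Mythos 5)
Your proposal is correct and follows essentially the same route as the paper: reduce to Theorem \ref{thm:3-1} via finite separation, obtain a separating functional $\la v,\Phi(\cdot)\ra_{\cH}$ with $v$ in the span of the $\Phi(x_i)$ (so that it lies in $\cH_{tk}$ as a combination of kernel functions), use injectivity plus the fact that a finite-dimensional space is not a finite union of proper subspaces, and finish the second assertion with Lemma \ref{lem:3-2} and the identity $\|k_x-k_y\|_{\cH_k}^2=\|\Phi(x)-\Phi(y)\|_{\cH}^2$. The only cosmetic difference is that the paper justifies the finite-union step by a volume comparison, where you cite the standard linear-algebra fact over $\C$; the arguments are interchangeable.
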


\begin{proof}
We set $k(x,y)=\la \Phi(y),\Phi(x) \ra_{\cH}$. Then, $k$ is a kernel. 
First, we shall show that $\exp tk$ is strictly positive definite. 
By virtue of Theorem \ref{thm:3-1},
it suffices to show that $X$ is finitely separated by $\cH_{tk}$. 
Let $x_1,\ldots,x_n$ be any distinct points in $X$, 
and let $\cM$ be the subspace generated by $\Phi(x_1),\ldots,\Phi(x_n)$. 
Since $\Phi$ is injective, these are $n$ distinct vectors in $\cH$. 
We shall show that there exists some vector $\mathbf{z}$ in $\cH$ such that 
$\la \Phi(x_i),\mathbf{z} \ra_{\cH}\neq \la \Phi(x_j),\mathbf{z} \ra_{\cH}$ whenever $i\neq j$. 
Without loss of generality, we may assume that $\mathbf{z}$ is a vector in $\cM$. 
For any vector $\mathbf{y}$ in $\cM$,   
if there existed $1\leq i(\mathbf{y})<j(\mathbf{y})\leq n$ 
such that $\la \Phi(x_{i(\mathbf{y})}), \mathbf{y}\ra_{\cH}=\la \Phi(x_{j(\mathbf{y})}), \mathbf{y}\ra_{\cH}$, 
then we would have
\[
\cM \subset \bigcup_{1\leq i< j\leq n} \{\Phi(x_i)-\Phi(x_j)\}^{\perp},
\]
where the above orthogonal complements are taken in $\cM$. However, 
comparing the volumes of $\cM$ and $ \{\Phi(x_i)-\Phi(x_j)\}^{\perp}$ in $\cM$, we have a contradiction. 
Hence, there exists a vector $\mathbf{z}$ in $\cM$ such that
\[
\la \Phi(x_i),\mathbf{z} \ra_{\cH}\neq \la \Phi(x_j),\mathbf{z} \ra_{\cH}\quad (i\neq j).
\]
Moreover, 
setting $\psi(x)=\overline{\la \Phi(x), \mathbf{z} \ra_{\cH}}$ and 
$\mathbf{z}=\sum_{j=1}^nc_j\Phi(x_j)$, we have, 
\[
\psi(x)=\la \mathbf{z}, \Phi(x) \ra_{\cH}= \sum_{j=1}^nc_j\la \Phi(x_j),\Phi(x) \ra_{\cH} =\sum_{j=1}^nc_jk(x,x_j)
=\sum_{j=1}^nc_jk_{x_j}(x).
\]
Hence, $\psi$ belongs to $\cH_{tk}$. 
Thus, we have the first half of the theorem. 
Moreover, since
\begin{align*}
\|k_x-k_y\|_{\cH_k}^2
&=k(x,x)-2\operatorname{Re}k(x,y)+k(y,y)\\
&=\la \Phi(x),\Phi(x)\ra_{\cH}-2\operatorname{Re}\la \Phi(x),\Phi(y) \ra_{\cH}+\la \Phi(y),\Phi(y)\ra_{\cH}\\
&=\|\Phi(x)-\Phi(y)\|_{\cH}^2,
\end{align*}
we have the second half by Lemma \ref{lem:3-2}. 
\end{proof}

\section{Examples}

\subsection{Gaussian kernel}

Let $\cH$ be any Hilbert space. 
Then, it is well known that
\[
\exp\left(-t\|x-y\|_{\cH}^2\right)
\] is strictly positive definite on $\cH\times \cH$ for any $t>0$.  
Indeed, this is a direct consequence of Theorem \ref{thm:3-2}.  

\subsection{Drury-Arveson kernel}
Let $\cH$ be a Hilbert space, and let ${\mathbb B}_{\cH}$ be the open unit ball in $\cH$. 
Then, it is well known that
\[
\frac{1}{1-\la x,y \ra_{\cH}}
\] is strictly positive definite on ${\mathbb B}_{\cH}\times {\mathbb B}_{\cH}$ (see Subsection 7.3.1 in \cite{PR}). 
Indeed, consider the case 
where $k(x,y)=\la x,y \ra_{\cH}$ and $\varphi(z)=\sum_{n=0}^{\infty}z^n$ in Theorem \ref{thm:3-1} .

\subsection{Pseudo-hyperbolic distance}

Let $\D$ be the open unit disk in the complex plane $\C$. 
Then, it is well known that $\D$ is a metric space with the distance function
\[
d(\lam,\mu)=\left| \frac{\lam-\mu}{1-\overline{\mu}\lam} \right|\quad (\lam,\mu \in \D),
\]
which is called the pseudo-hyperbolic distance. 
We shall prove that 
\[
K_t(\lam,\mu)=
\exp\left(-t\left| \frac{\lam-\mu}{1-\overline{\mu}\lam} \right|^2\right)
\] is strictly positive definite on $\D^2$ for any $t>0$. 
We set 
\[
\psi(\lam,\mu)=d(\lam,\mu)^2.
\]
Then, trivially, $\psi$ is real-valued and symmetric. 
Let $H^2$ denote the Hardy space over $\D$, 
and let $s_{\lam}$ be the normalized reproducing kernel of $H^2$, that is, we set 
\[
s_{\lam}(z)=\frac{\sqrt{1-|\lam|^2}}{1-\overline{\lam}z}\quad (\lam \in \D).
\]
Then, for any $n$ in $\N$, $\lam_1,\ldots, \lam_n$ in $\D$, and $c_1,\ldots, c_n$ in $\C$ such that $\sum_{j=1}^nc_j=0$, 
we have
\begin{align*}
\sum_{i,j=1}^n\overline{c_i}c_j\psi(\lam_i,\lam_j)
&=\sum_{i,j=1}^n\overline{c_i}c_j(1-|\la s_{\lam_i}, s_{\lam_j}\ra_{H^2}|^2)\\
&=\sum_{i,j=1}^n\overline{c_i}c_j-\sum_{i,j=1}^n\overline{c_i}c_j|\la s_{\lam_i}, s_{\lam_j}\ra_{H^2}|^2\\
&=-\sum_{i,j=1}^n\overline{c_i}c_j|\la s_{\lam_i}, s_{\lam_j}\ra_{H^2}|^2\\
&=-\sum_{i,j=1}^n\overline{c_i}c_j\overline{\la s_{\lam_i}, s_{\lam_j}\ra_{H^2}}\la s_{\lam_i}, s_{\lam_j}\ra_{H^2}\\
&\leq 0
\end{align*}
by the Schur product theorem. 
Hence, $\psi$ is conditionally negative definite. 
In particular, $\exp (-t\psi)$ is a kernel for any $t>0$ by Schoenberg's generator theorem (Theorem 9.7 in \cite{PR}). 
Moreover, it follows from Proposition 9.3 in \cite{PR} that 
\begin{align}
k(\lam,\mu)
&=-\psi(\lam,\mu)+\psi(\lam,0)+\psi(0,\mu)-\psi(0,0)\notag\\
&=-d(\lam,\mu)^2+|\lam|^2+|\mu|^2\label{eq:1-1}
\end{align}
is a kernel on $\D^2$. 
Let $\cH_k$ denote the reproducing kernel Hilbert space generated by this $k$.  
Then, we have 
\begin{align*}
\dfrac{1}{2}\|k_{\lam}-k_{\mu}\|_{\cH_k}^2
&=\frac{1}{2}(k(\lam,\lam)-2k(\lam,\mu)+k(\mu,\mu))\\
&=\frac{1}{2}\{2|\lam|^2-2(-d(\lam,\mu)^2+|\lam|^2+|\mu|^2)+2|\mu|^2\}\\
&=d(\lam,\mu)^2\\
&=\psi(\lam,\mu).
\end{align*} 
Hence, 
by virtue of Theorem \ref{thm:3-2}, 
in order to prove that $K_t$ is strictly positive definite,
it suffices to show that the map $\Phi:\lam \mapsto k_{\lam}$ is injective. 
However, it is trivial, because 
\[
d(\lam,\mu)^2=\dfrac{1}{2}\|k_{\lam}-k_{\mu}\|_{\cH_k}^2.
\]

\subsection{Word metric}
Let $G$ be a free group with the finite generators $a_1,\ldots, a_N$. 
For any element $g$ in $G$, $|g|$ will denote the word length of $g$. 
Then, it is well known that $d(g,h)=|h^{-1}g|$ defines a metric on $G$, which is called the word metric on $G$. 
Furthermore, in \cite{H}, Haagerup showed that 
\[
K_t(g,h)=
\exp(-t|h^{-1}g|)
\]
is positive semi-definite on $G\times G$ for any $t>0$. 
We shall show that $K_t$ is strictly positive definite. 
Some notations in Lemma 1.2 of \cite{H} are needed.  
We set 
\[
\Lambda_j=\{(g,h)\in G\times G:g^{-1}h=a_j\}
\quad \mbox{and}\quad \Lambda=\bigcup_{j=1}^N\Lambda_j.
\]
Then, $\cH_{\Lambda}$ denotes a Hilbert space 
with an orthonormal basis $\{\mathbf{e}_{(g,h)}\}_{(g,h)\in \Lambda}$ indexed by $\Lambda$.  
Moreover, we set ${\mathbf e}_{(g,h)}=-{\mathbf e}_{(h,g)}$ if $g^{-1}h=a_j^{-1}$ for some $1\leq j \leq N$. 
Let $g$ be an element in $G$, and let 
$g=g_1\cdots g_n$ be the word for $g$.   
Then, we set $f_0=e$, the unit element of $G$, 
and $f_{\ell -1}^{-1}f_{\ell}=g_{\ell}$ for $1\leq \ell \leq n$.  
Let $\Phi$ be the map from $\Lambda$ to $\cH_{\Lambda}$ defined as follows: 
\[
\Phi(g)={\mathbf e}_{(f_0,f_1)}+{\mathbf e}_{(f_1,f_2)}+\cdots+{\mathbf e}_{(f_{n-1},f_n)}.
\]
Then, it is proved that 
\begin{equation}\label{eq:4-2}
\|\Phi(g)-\Phi(h)\|_{\cH_{\Lambda}}^2=|h^{-1}g|
\end{equation}
(see p.\ 283 of \cite{H}). 
Hence, 
by virtue of Theorem \ref{thm:3-2}, 
in order to show that $K_t$ is strictly positive definite, 
it suffices to show that $\Phi: G\to \cH_{\Lambda}$ is injective. 
However, it is trivial by (\ref{eq:4-2}).

\section{Universality}

In this section, we deal with 
real-valued kernels and real Hilbert spaces.  
Let $X$ be a locally compact Hausdorff space, and let $k=k(x,y)$ be a real-valued continuous kernel on $X\times X$. 
Then, $\exp k$ is continuous, and so are all functions in $\exp \cH_k$. 
Let $K$ be a compact subset in $X$, 
and let $C(K)$ be the Banach algebra consisting of all continuous functions on $K$. 
Then it follows from (\ref{eq:2-2}) that
\[
\|f\|_{\infty,K}:=\sup_{x\in K}|f(x)|\leq \|F\|_{\cF}\sup_{x\in K}\exp \frac{k(x,x)}{2}<\infty. 
\]
for every $f$ in $\exp \cH_k$. 
Let $\cF_0$ denote the subspace of $\cF$ consisting of vectors with finite support. 
Then, by Lemma \ref{lem:3-1}, $\Gamma \cF_0$ is an algebra. 
In fact, the following diagram commutes:
\[
\begin{CD}
\cH_k^{\otimes n} @>{L_{\Psi}}>> \cH_k^{\otimes m+n} \\
@V{\Delta_n}VV    @VV{\Delta_{m+n}}V \\
\cH_k^n   @>>{M_{\psi}|_{\cH_k^n}}>  \cH_k^{m+n}
\end{CD}
\]
where $\Psi$ is a function in $\cH_k^{\otimes m}$, and we set 
$\psi=\Psi \circ \Delta_m$ and  
\[
L_{\Psi}:
\cH_k^{\otimes n} \to\cH_k^{\otimes m+n}, 
\quad F\mapsto \Psi \otimes F.
\]
Hence, the closure of $\{f|_K:f\in \Gamma\cF_0\}$ with respect to the norm $\|\cdot\|_{\infty,K}$ is 
a Banach subalgebra of $C(K)$. 
This observation leads us to a new proof of the universal approximation theorem for the Gaussian kernel.
The following is the main theorem of this section. 
(cf. Theorem 9 in Steinwart~\cite{Steinwart} and Theorem 4.1 in Guella~\cite{Guella}). 

\begin{theorem}\label{thm:5-1}
Let $X$ be a locally compact Hausdorff space, 
and let $k=k(x,y)$ be a real-valued continuous kernel on $X\times X$. 
If $\cH_k$ separates any distinct two points in $X$, 
then, for any compact subset $K$ in $X$, any $\varepsilon>0$ and any function $f$ in $C(K)$, 
there exist $c_1,\ldots, c_N$ in $\R$ and $a_1,\ldots,a_N$ in $X$ such that
\[
\left\| f-\sum_{j=1}^Nc_j\exp k(x,a_j)\right\|_{\infty,K}<\varepsilon.
\]
\end{theorem}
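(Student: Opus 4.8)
The plan is to derive the statement from the real Stone--Weierstrass theorem. First I would identify the functions appearing in the conclusion. With $\varphi(z)=e^z$ we have $a_n=1/n!$, so the reproducing kernel of $\varphi(\cH_k)=\exp\cH_k$ is $\varphi(k_a)$, and as a function of $x$ it is $\varphi(k_a)(x)=\sum_{n\ge 0}\tfrac{1}{n!}k(x,a)^n=\exp k(x,a)$. Thus, writing $\mathcal A$ for the real linear span of $\{\varphi(k_a):a\in X\}$, the theorem asserts exactly that $\mathcal A|_K$ is $\|\cdot\|_{\infty,K}$-dense in $C(K)$. The immediate difficulty is that $\mathcal A$ is \emph{not} an algebra: $\exp k(x,a)\,\exp k(x,b)=\exp(k(x,a)+k(x,b))$ need not be a finite combination of the $\exp k(\cdot,c)$, so Stone--Weierstrass cannot be applied to $\mathcal A$ directly. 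I would instead route through $\Gamma\cF_0$, which the discussion preceding the theorem already shows to be an algebra whose $\|\cdot\|_{\infty,K}$-closure $\mathcal B$ is a Banach subalgebra of $C(K)$.

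The key step is to show that $\mathcal A$ and $\Gamma\cF_0$ have the same sup-norm closure on $K$. Both are dense in $\varphi(\cH_k)$ in the Hilbert norm: $\mathcal A$ because the span of reproducing kernels is always dense in an RKHS (a vector orthogonal to every $\varphi(k_a)$ vanishes everywhere), and $\Gamma\cF_0$ because $\cF_0$ is dense in $\cF$ while $\Gamma$ is a norm-decreasing surjection onto $\varphi(\cH_k)$. Moreover the restriction map $R:\varphi(\cH_k)\to C(K)$ is continuous: the reproducing property gives $|f(x)|=|\la f,\varphi(k_x)\ra_{\varphi(\cH_k)}|\le \|f\|_{\varphi(\cH_k)}\exp(k(x,x)/2)$, and $\sup_{x\in K}\exp(k(x,x)/2)<\infty$ by continuity of $k$ and compactness of $K$ (this is essentially (\ref{eq:2-2})). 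Hence $R(\mathcal A)$ and $R(\Gamma\cF_0)$ are both $\|\cdot\|_{\infty,K}$-dense in $R(\varphi(\cH_k))$, so their sup-norm closures coincide; in particular $\overline{R(\mathcal A)}^{\,\infty}=\overline{R(\Gamma\cF_0)}^{\,\infty}=\mathcal B$.

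It then remains to verify the Stone--Weierstrass hypotheses for $\mathcal B$. It is a Banach subalgebra by the preceding observation; it contains the real constants since $\Gamma(c,0,0,\ldots)^{\top}=a_0c=c$; and it separates points of $K$, because for distinct $x,y\in K$ the separation hypothesis on $\cH_k$ yields $\psi\in\cH_k$ with $\psi(x)\neq\psi(y)$, and $\psi=\Gamma(0,\psi,0,\ldots)^{\top}\in\Gamma\cF_0\subseteq\mathcal B$. The real Stone--Weierstrass theorem then forces $\mathcal B=C(K)$, whence $\overline{R(\mathcal A)}^{\,\infty}=C(K)$, which is precisely the claimed approximation by finite sums $\sum_{j}c_j\exp k(x,a_j)$.

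The main obstacle is conceptual rather than computational: one must recognize that the target functions $\exp k(\cdot,a)$ do not close up under multiplication, and then bridge the gap by replacing $\mathcal A$ with the genuine algebra $\Gamma\cF_0$ while arguing that passing to the sup-norm closure on $K$ loses nothing. The engine for that bridge is the pair of Hilbert-norm density statements together with the continuity of the restriction map, so the care needed is in justifying that continuity and in keeping everything within the real-scalar setting required by Stone--Weierstrass.
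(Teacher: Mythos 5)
Your proposal is correct and takes essentially the same route as the paper's proof: Stone--Weierstrass applied to the sup-norm closure of $\Gamma\cF_0$ restricted to $K$, combined with Hilbert-norm density of the span of the kernels $\exp k(\cdot,a)$ in $\exp\cH_k$ and the evaluation bound (\ref{eq:2-2}) to transfer that approximation to the norm $\|\cdot\|_{\infty,K}$. The only differences are organizational: the paper chains the two approximations through an explicit triangle inequality, whereas you phrase the bridge as coincidence of sup-norm closures, and you verify the Stone--Weierstrass hypotheses (constants, point separation) explicitly where the paper leaves them implicit.
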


\begin{proof}
Let $A$ denote the Banach algebra obtained 
by taking the closure of $\{f|_K:f\in \Gamma \cF_0\}$ with respect to the norm $\|\cdot\|_{\infty,K}$. 
Then, it follows from the Stone-Weierstrass theorem that $A=C(K)$. 
Hence, for any $\varepsilon>0$ and any function $f$ in $C(K)$, 
there exists a vector $G$ in $\cF_0$ such that $\|f-\Gamma G\|_{\infty,K}<\varepsilon$.  
Moreover, there exists a finite linear combination $h$ of reproducing kernels of $\exp \cH_k$ such that  
$\|\Gamma G-h\|_{\exp{\cH_k}}<\varepsilon$. 
Hence, setting 
\[
M_K=\sup_{x\in K}\exp \frac{k(x,x)}{2},
\]  
$M_K$ is finite and 
we have
\begin{align*}
|f(x)-h(x)|
&\leq |f(x)-(\Gamma G)(x)|+|(\Gamma G)(x)-h(x)|\\
&\leq \|f-\Gamma G\|_{\infty,K}+\|\Gamma G-h\|_{\exp \cH_k}\|\exp k_x\|_{\exp \cH_k}\\
&<(1+M_K)\varepsilon
\end{align*}
for any $x$ in $K$. 
Therefore, we have the conclusion. 
\end{proof}

\begin{corollary}\label{cor:5-1}
For any compact subset $K$ in $\R^n$, any $\varepsilon>0$ and any function $f$ in $C(K)$, 
there exist $c_1,\ldots, c_N$ in $\R$ and $a_1,\ldots,a_N$ in $\R^n$ such that 
\[
\left\|f-\sum_{j=1}^Nc_je^{-\|x-a_j\|_{\R^n}^2}\right\|_{\infty,K}<\varepsilon.
\]
\end{corollary}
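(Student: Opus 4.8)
The plan is to deduce this from Theorem \ref{thm:5-1} by taking $X=\R^n$ with the kernel $k(x,y)=2\la x,y\ra_{\R^n}$ and then converting exponentials of this inner-product kernel into Gaussians by a multiplicative change of function. First I would check that $k(x,y)=2\la x,y\ra_{\R^n}$ satisfies the hypotheses of Theorem \ref{thm:5-1}: it is a real-valued continuous kernel on $\R^n\times\R^n$ (a positive multiple of the standard inner product, hence positive definite), $\R^n$ is locally compact Hausdorff, and $\cH_k$ separates points, since $\cH_k$ contains the linear functions $x\mapsto 2\la x,w\ra_{\R^n}$ and for distinct $x_1,x_2$ the choice $w=x_1-x_2$ gives $2\la x_1,w\ra_{\R^n}-2\la x_2,w\ra_{\R^n}=2\|x_1-x_2\|_{\R^n}^2\neq 0$.

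The key step is the identity
\[
e^{-\|x-a\|_{\R^n}^2}=e^{-\|a\|_{\R^n}^2}e^{-\|x\|_{\R^n}^2}e^{2\la x,a\ra_{\R^n}}=e^{-\|a\|_{\R^n}^2}e^{-\|x\|_{\R^n}^2}\exp k(x,a),
\]
which shows that, up to the fixed factor $e^{-\|x\|_{\R^n}^2}$ and a constant depending only on $a$, the Gaussian centered at $a$ is exactly the exponential-kernel function $\exp k(x,a)$ supplied by Theorem \ref{thm:5-1}. The factor $2$ in the definition of $k$ is chosen precisely so that the cross term of $\|x-a\|_{\R^n}^2$ reproduces $\exp k(x,a)$. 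Accordingly, given $f\in C(K)$ and $\varepsilon>0$, I would apply Theorem \ref{thm:5-1} not to $f$ but to the auxiliary function $g(x)=e^{\|x\|_{\R^n}^2}f(x)$; since $e^{\|x\|_{\R^n}^2}$ is continuous, $g$ lies in $C(K)$, and Theorem \ref{thm:5-1} yields $c_1',\ldots,c_N'\in\R$ and $a_1,\ldots,a_N\in\R^n$ with $\|g-\sum_j c_j'\exp k(\cdot,a_j)\|_{\infty,K}<\varepsilon$.

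Finally I would multiply this approximation by $e^{-\|x\|_{\R^n}^2}$. By the identity above, $e^{-\|x\|_{\R^n}^2}\exp k(x,a_j)=e^{\|a_j\|_{\R^n}^2}e^{-\|x-a_j\|_{\R^n}^2}$, so setting $c_j=c_j'e^{\|a_j\|_{\R^n}^2}$ gives $\sum_j c_j e^{-\|x-a_j\|_{\R^n}^2}=e^{-\|x\|_{\R^n}^2}\sum_j c_j'\exp k(x,a_j)$, while $f(x)=e^{-\|x\|_{\R^n}^2}g(x)$. Hence
\[
f(x)-\sum_{j=1}^N c_j e^{-\|x-a_j\|_{\R^n}^2}=e^{-\|x\|_{\R^n}^2}\Bigl(g(x)-\sum_{j=1}^N c_j'\exp k(x,a_j)\Bigr),
\]
and because $0<e^{-\|x\|_{\R^n}^2}\le 1$ for all $x$, multiplication by $e^{-\|x\|_{\R^n}^2}$ does not increase the sup-norm on $K$, so $\|f-\sum_j c_j e^{-\|\cdot-a_j\|_{\R^n}^2}\|_{\infty,K}\le\|g-\sum_j c_j'\exp k(\cdot,a_j)\|_{\infty,K}<\varepsilon$.

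I expect the main, and essentially the only, obstacle to be recognizing the correct multiplicative reformulation: the Gaussian exponent $-\|x-y\|_{\R^n}^2$ is merely conditionally negative definite, so $-\|x-y\|_{\R^n}^2$ cannot itself serve as the kernel $k$ in Theorem \ref{thm:5-1}. The device of peeling off the diagonal factor $e^{-\|x\|_{\R^n}^2}$ and working instead with the genuinely positive-definite inner-product kernel $2\la x,y\ra_{\R^n}$ is exactly what makes the reduction to Theorem \ref{thm:5-1} go through; everything else is routine.
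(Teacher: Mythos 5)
Your proof is correct and follows essentially the same route as the paper: apply Theorem \ref{thm:5-1} with $k(x,y)=2\la x,y\ra_{\R^n}$ to the auxiliary function $e^{\|x\|_{\R^n}^2}f(x)$, then peel off the factor $e^{-\|x\|_{\R^n}^2}$ (which has modulus at most $1$) and absorb $e^{\|a_j\|_{\R^n}^2}$ into the coefficients. Your explicit verification that $\cH_k$ separates points via $w=x_1-x_2$ is a small bonus the paper leaves implicit.
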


\begin{proof}
We set $k(x,y)=2\la x,y \ra_{\R^n}$. Then, $k$ is a real-valued continuous kernel on $\R^2$.  
Let $f$ be any function in $C(K)$. 
Then, by Theorem \ref{thm:5-1}, 
there exist $d_1,\ldots, d_N$ in $\R$ and $a_1,\ldots,a_N$ in $\R^n$ such that
\[
\left\| e^{\|x\|_{\R^n}^2}f-\sum_{j=1}^Nd_je^{2\la x,a_j \ra_{\R^n}}\right\|_{\infty,K}<\varepsilon.
\]
We set $c_j=d_je^{\|a_j\|_{\R^n}^2}$. 
Then, we have
\begin{align*}
\left|f(x)-\sum_{j=1}^Nc_je^{-\|x-a_j\|_{\R^n}^2}\right|
&=\left| f(x)- e^{-\|x\|_{\R^n}^2}\sum_{j=1}^Nc_je^{-\|a_j\|_{\R^n}^2}e^{2\la x,a_j \ra_{\R^n}}\right|\\
&=e^{-\|x\|_{\R^n}^2}\left| e^{\|x\|_{\R^n}^2}f(x)- \sum_{j=1}^Nd_je^{2\la x,a_j \ra_{\R^n}}\right|\\
&\leq \left\|  e^{\|x\|_{\R^n}^2}f-\sum_{j=1}^Nd_je^{2\la x,a_j \ra_{\R^n}}\right\|_{\infty,K}\\
&<\varepsilon.
\end{align*}
This concludes the proof. 
\end{proof}

\begin{acknowledgment}\rm 
The author would like to thank J. C. Guella for his valuable comments on the previous version of this paper. 
This paper grew out of my three days lectures in Nagoya University. 
Special thanks are due to Professor Yoshimichi Ueda and Kenta Kojin.  
This work was supported by JSPS KAKENHI Grant Numbers JP20K03646 and JP21K03285. 
\end{acknowledgment}

\end{document}